\newcommand{\Rmnum}[1]{\expandafter\@slowromancap\romannumeral #1@}
 \newtheorem{lem}{Lemma}[section]  \newtheorem{thm}{Theorem}[section]
 \newtheorem{defn}{Definition}[section]  
\numberwithin{equation}{section}
\numberwithin{equation}{section}
 \newcommand{\me}{\mathrm{e}} 
\newcommand{\dif}{\mathrm{d}} \DeclareMathAlphabet{\mathsfsl}{OT1}{cmss}{m}{sl} \DeclareMathAlphabet{\mathpzc}{OT1}{pzc}{m}{it}
    \newcommand{\ee}{\mathbb{E}}
   \newcommand{\nn}{\mathbb{N}} \newcommand{\rr}{\mathbb{R}}
\newcommand{\vv}{\mathbb{V}}
 \def\CC{\mathcal C}   \def\FF{\mathcal F}  \def\HH{\mathcal H} 
   \def\MM{\mathcal M}    
\def\PP{\mathscr P} \def\BB{\mathscr B}
 \def\d"{^{\prime\prime}} \def\d'{^{\prime}}
\begin{document}
		
		\title[]{Note on an intermediate Baum-Katz theorems under sub-linear expectations}
		\thanks{Supported by Doctoral Scientific Research Starting Foundation of Jingdezhen Ceramic University ( No. 102/01003002031), Academic Achievement Re-cultivation Projects of Jingdezhen Ceramic University (Nos. 215/20506341, 215/20506594, 215/20506277).}
		\date{} \maketitle
		\begin{center}
			
			MINGZHOU XU~\footnote{Email: mingzhouxu2022@163.com} AND
			JINJIN YANG
			\\
			School of Information Engineering, Jingdezhen Ceramic University, Jingdezhen 333403, China

		\end{center}

		{\bf Abstract}\quad		We study the Hsu-Robbins-Erd\"{o}s theorem to the case when all moments under sub-linear expectations exist, but the moment generating function does not,
        viz, we suppose that $C_{\vv}\left\{\exp\left\{\left(\log^{+}|X|\right)^{\alpha}\right\}\right\}<\infty$ for some $\alpha>1$. We also investigate multi-index version.	
		
		{\bf Keywords}\quad  sums of i. i. d. random variables; convergence rates; random fields; sub-linear expectations
		
		{\bf MR(2010) Subject Classification}\quad 60F15, 60G50
		
\section{Introduction}


        Peng \cite{peng2007g} gives  the concepts of the sub-linear expectations space to investigate the uncertainty in probability.  The results of Peng \cite{peng2007g} inspired people to study the results under sub-linear expectations space, which generalize 
        the corresponding ones in probability space. Zhang \cite{zhang2016exponential} studied exponential inequalities under sub-linear expectations. Under sub-linear expectations, 
        Xu and Cheng \cite{xu2022small} got how small the increments of $G$-Brownian motion are, Xu \cite{xu2025complete} studied complete $q$-order moment convergence of moving average processes generated by negatively random variables. 
        For more limit theorems under sub-linear expectations, the interested readers could refer to Gao and Xu \cite{gao2011large}, Peng \cite{peng2019nonlinear}, Zhang \cite{zhang2015donsker, zhang2016rosenthal, zhang2022strong}, 
        Chen \cite{chen2016strong}, Zhong and Wu \cite{zhong2017complete}, Wu and Jiang \cite{wu2018strong}, Xu and Zhang \cite{xu2019three, xu2020law},
        Kuczmaszewska \cite{kuczmaszewska2020complete}, Xu and Cheng \cite{xu2021convergence, xu2022note,xu2022noteb, xu2025equivalent}, Xu et al. \cite{xu2022complete, xu2023convergence}, 
        Xu and Kong \cite{xu2023noteon, xu2024completeqth}, Chen and Wu \cite{chen2022complete}, Xu \cite{xu2023acomplete, xu2023completeconvergencea,  xu2024onthe, xu2024sample}, Xu and Xie \cite{xu2025completemo}, and the references therein.
        
        In probability space, Gut and Stadtm\"{u}ller \cite{gut2011} got an intermediate Baum-Katz theorem. For references on complete convergence in linear expectation space, 
        the interested reader could refer to Wang et.al \cite{wang2024some} and the refercences therein. Motivated by the works of
        Gut and Stadtm\"{u}ller \cite{gut2011}, Gao and Xu \cite{gao2011large}, we try to investigate 
        an intermediate Baum-Katz theorem  under sub-linear expectations, which complements that in Gut and Stadtm\"{u}ller \cite{gut2011}.
        
        The rest of this paper is presented as follows. We give necessary notions, concepts, relevant  properties, and lemmas under sub-linear expectations in the next section. 
        In Section 3, we present our main result, Theorem \ref{thm01},  the proof of which is postponed in Section 4. 
        In Section 5, we present the multi-index version of Theorem \ref{thm01}, Theorem \ref{thm02}, and explain its proof.
		
\section{Preliminary}
	\setcounter{equation}{0}
	We use similar notations as in the work by Peng \cite{peng2019nonlinear}, Zhang \cite{zhang2016exponential}. 
	Assume that $(\Omega,\FF)$ is a given measurable space. Suppose that $\HH$ is a subset of all random variables 
	on $(\Omega,\FF)$ satisfying that $X_1,\cdots,X_n\in \HH$ yields $\varphi(X_1,\cdots,X_n)\in \HH$ for every $\varphi\in \CC_{l,Lip}(\rr^n)$, where $\CC_{l,Lip}(\rr^n)$ stands for the linear space of (local lipschitz) function $\varphi$ fulfilling
	$$
	|\varphi(\mathbf{x})-\varphi(\mathbf{y})|\le C(1+|\mathbf{x}|^m+|\mathbf{y}|^m)(|\mathbf{x}-\mathbf{y}|), \forall \mathbf{x},\mathbf{y}\in \rr^n
	$$
	for some $C>0$, $m\in \nn$ relying on $\varphi$.
	\begin{defn}\label{defn1} A sub-linear expectation $\ee$ on $\HH$ is a functional $\ee:\HH\mapsto \bar{\rr}:=[-\infty,\infty]$ fulfilling the properties below: for all $X,Y\in \HH$, we have
		\begin{description}
			\item[\rm (a)] If $X\ge Y$, then $\ee[X]\ge \ee[Y]$;
			\item[\rm (b)] $\ee[c]=c$, $\forall c\in\rr$;
			\item[\rm (c)] $\ee[\lambda X]=\lambda\ee[X]$, $\forall \lambda\ge 0$;
			\item[\rm (d)] $\ee[X+Y]\le \ee[X]+\ee[Y]$ whenever $\ee[X]+\ee[Y]$ is not of the form $\infty-\infty$ or $-\infty+\infty$.
		\end{description}
		
	\end{defn}
	We call $V:\FF\mapsto[0,1]$ to be a capacity if
	\begin{description}
		\item[\rm (a)]$V(\emptyset)=0$, $V(\Omega)=1$;
		\item[\rm (b)]$V(A)\le V(B)$, $A\subset B$, $A,B\in \FF$.
	\end{description}
	A capacity $V$ is named to be sub-additive if $V(A\bigcup B)\le V(A)+V(B)$, $A,B\in \FF$.

	In this sequel, given a sub-linear expectation space $(\Omega, \HH, \ee)$, we suppose that $\ee$ 
	is regular, i. e., for all $\{X_n, n\in \nn\}\subset\HH$, $X_n(\omega)\downarrow0$, 
	$\omega\in\Omega\Longrightarrow\lim_{n\rightarrow\infty}\ee(X_n)=0$. Let $\MM$ stands for the collections
	of all probability measures on $\Omega$. By Theorem 1.2.2 of Peng \cite{peng2019nonlinear}, there
	exists a relatively compactly subset $\PP\subset \MM$ satisfying that 
	\[\ee(X)=\sup_{P\in \PP}\mathbf{E}_P(X), \mbox{  for all $X\in \HH$,}
	\]
	where $\mathbf{E}_P(X)$ means the linear expectation of $X$ under probability measure $P$.
	For any Borel measurable function $X$, set 
	\[\bar{\ee}(X)=\sup_{P\in\PP}\mathbf{E}_P(X), \vv(A)=\bar{\ee}(I_A), \forall A\in \BB(\Omega),
	\]
	where $I(A)$ or $I_A$ stands for the indicator function of $A$ in this sequel.
	Then $\vv$ is a sub-additive capacity. 
	Set
	$$
	C_{\vv}(X):=\int_{0}^{\infty}\vv(X>x)\dif x +\int_{-\infty}^{0}(\vv(X>x)-1)\dif x.
	$$
	
	Assume that $\mathbf{X}=(X_1,\ldots,X_m)$, $X_i\in \HH$ and $\mathbf{Y}=(Y_1,\ldots,Y_n)$, $Y_i\in \HH$ are two random variables on $(\Omega,\HH,\ee)$. $Y$ is said to be independent of $X$, if each
	$\psi\in \CC_{l,Lip(\rr^{m+n})}$, we have
	\[\ee[\pi(\mathbf{X},\mathbf{Y})]=\ee\left[\ee\left[\psi(\mathbf{x},\mathbf{Y})\right]|_{\mathbf{x}=\mathbf{X}}\right],
	\]
	cf. Definition 1.3.11 of Peng \cite{peng2019nonlinear}. $\{X_n\}_{n=1}^{\infty}$ is called to be
	a sequence of independent random variables, if $X_{n+1}$ is independent of $(X_1,\ldots,X_n)$ for each $n\ge 1$.
	
    Assume that $\mathbf{X}_1$ and $\mathbf{X}_2$ are two $n$-dimensional random vectors defined, respectively, in sub-linear expectation spaces $(\Omega_1,\HH_1,\ee_1)$ and $(\Omega_2,\HH_2,\ee_2)$. They are said to be identically distributed if  
	for every $\psi\in \CC_{l,Lip}(\rr^n)$ such that $\psi(\mathbf{X}_1)\in \HH_1, \psi(\mathbf{X}_2)\in \HH_2$,
	$$
	\ee_1[\psi(\mathbf{X}_1)]=\ee_2[\psi(\mathbf{X}_2)], \mbox{  }
	$$
	whenever the sub-linear expectations are finite. $\{X_n\}_{n=1}^{\infty}$ is said to be identically distributed if for each $i\ge 1$, $X_i$ and $X_1$ are identically distributed.
	
	We give two lemmas below.
	\begin{lem}\label{lem01}
		Suppose that $X\le 1$ and $\ee(X)\le 0$ in the sub-linear expectation space $(\Omega, \HH, \ee)$.
		Then we have
	\begin{align*}
		\ee\left(\exp(X)\right)\le \exp(\ee(X^2)).
	\end{align*}
	\end{lem}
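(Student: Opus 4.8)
The plan is to reduce the exponential bound to a purely elementary scalar inequality and then invoke only the defining properties of $\ee$ from Definition \ref{defn1}, using no independence or distributional structure. The cornerstone will be the one-variable inequality
$$\me^x \le 1 + x + x^2, \qquad x \le 1,$$
which I would establish by elementary calculus: setting $g(x) = 1 + x + x^2 - \me^x$, one checks $g(0) = g'(0) = 0$, analyses the sign of $g''(x) = 2 - \me^x$ to locate the single inflection at $x = \log 2$, and verifies that $g$ stays nonnegative throughout $(-\infty, 1]$ (in particular $g(1) = 3 - \me > 0$). This is the only genuinely computational step, and it is also where the hypothesis $X \le 1$ enters decisively — the inequality fails for large positive $x$, so restricting attention to $\{X \le 1\}$ is essential.

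Granting the scalar inequality, since $X \le 1$ holds pointwise on $\Omega$, the bound $\exp(X) \le 1 + X + X^2$ holds as an inequality between elements of $\HH$ (note that $x \mapsto 1 + x + x^2$ lies in $\CC_{l,Lip}(\rr)$, and $\exp(X) = \exp(X \wedge 1) \in \HH$ because $x \mapsto \exp(x \wedge 1)$ is globally Lipschitz). Applying monotonicity, property (a), then gives
$$\ee\left(\exp(X)\right) \le \ee\left(1 + X + X^2\right).$$

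Next I would peel off the right-hand side using only the structural axioms, being careful that $\ee$ is merely sub-additive rather than additive. By sub-additivity, property (d), together with $\ee[c] = c$, property (b), I obtain
$$\ee\left(1 + X + X^2\right) \le 1 + \ee(X) + \ee(X^2).$$
The hypothesis $\ee(X) \le 0$ lets me discard the middle term, and the elementary inequality $1 + t \le \me^t$ applied to $t = \ee(X^2) \ge 0$ yields
$$1 + \ee(X) + \ee(X^2) \le 1 + \ee(X^2) \le \exp\left(\ee(X^2)\right),$$
which completes the chain $\ee(\exp(X)) \le \exp(\ee(X^2))$. The one point demanding care is the direction of sub-additivity: one must bound from above at every stage and never rely on an identity of the form $\ee(A+B) = \ee(A) + \ee(B)$. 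Apart from that, the argument rests on nothing beyond the four axioms of Definition \ref{defn1} and two one-variable inequalities.
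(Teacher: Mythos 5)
Your proof is correct and follows essentially the same route as the paper: both reduce the claim to the scalar inequality $\me^x\le 1+x+x^2$ for $x\le 1$ (the paper via monotonicity of $(\me^x-1-x)/x^2$, you via direct calculus on $g(x)=1+x+x^2-\me^x$), then apply monotonicity, sub-additivity, $\ee(X)\le 0$, and $1+t\le\me^t$. Your added care about membership of $\exp(X)$ in $\HH$ and the one-sided use of sub-additivity is a nice touch but does not change the argument.
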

	\begin{proof}
	By the fact that $\frac{\exp(x)-1-x}{x^2}\ge 0$ is a nondecreasing function of $x$, we see that
	\[\exp(X)-1-X\le X^2(\me-1-1),\]
	which together with $\ee(X)\le 0$ yields that
	\[\ee(\exp(X))\le \ee(1+X+X^2)\le 1+\ee(X^2)\le \exp(\ee(X^2)).\]
	This finishes the proof.
	\end{proof}
	\begin{lem}\label{lem02}
		Assume that $\{X_n,n\ge 1\}$ is a sequence of independent random variables in $(\Omega, \HH, \ee)$. Then for any open sets $O_k\in \BB(\rr)$, 
        any closed sets $F_k\in \BB(\rr)$, $k=1,2,\ldots$, $n\ge 2$, we have
        \begin{align*}
            \vv\left(\bigcap_{k=1}^{n}\left\{X_k\in O_k\right\}\right)=\prod_{k=1}^{n}\vv\left(X_k\in O_k\right),
        \end{align*}
        \begin{align*}
            \vv\left(\bigcap_{k=1}^{n}\left\{X_k\in F_k\right\}\right)=\prod_{k=1}^{n}\vv\left(X_k\in F_k\right).
        \end{align*}
	\end{lem}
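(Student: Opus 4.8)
The plan is to push everything onto the sub-linear expectation through $\vv(A)=\bar{\ee}(I_A)$ and to exploit that, on products of nonnegative local Lipschitz functions of independent coordinates, $\ee$ is multiplicative. Concretely, I would first prove that if $\varphi_1,\dots,\varphi_n\in\CC_{l,Lip}(\rr)$ satisfy $0\le\varphi_k\le 1$, then
\begin{align*}
\ee\left[\prod_{k=1}^{n}\varphi_k(X_k)\right]=\prod_{k=1}^{n}\ee\left[\varphi_k(X_k)\right].
\end{align*}
This follows by induction on $n$: since $X_n$ is independent of $(X_1,\dots,X_{n-1})$, the defining identity for independence lets one peel off the last factor, and because $\prod_{k<n}\varphi_k(x_k)\ge 0$ (for frozen $x$) and $\ee[\varphi_n(X_n)]\ge 0$ are the relevant nonnegative multipliers, two applications of positive homogeneity (property (c) of Definition \ref{defn1}) reduce the $n$-fold expectation to $\ee[\varphi_n(X_n)]$ times the $(n-1)$-fold expectation, to which the induction hypothesis applies. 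Since $\ee=\bar{\ee}$ on $\HH$ and each $\varphi_k(X_k)\in\HH$, the same identity holds with $\bar{\ee}$.

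For the open-set statement I would approximate each indicator from below by local Lipschitz functions. Setting $g_{k,j}(x)=\min\{1,\,j\,\dist(x,O_k^{c})\}$, one has $g_{k,j}\in\CC_{l,Lip}(\rr)$, $0\le g_{k,j}\le 1$, and $g_{k,j}\uparrow I_{O_k}$ as $j\to\infty$. Applying the multiplicativity identity for each fixed $j$ gives $\bar{\ee}\big[\prod_{k}g_{k,j}(X_k)\big]=\prod_{k}\bar{\ee}[g_{k,j}(X_k)]$. Letting $j\to\infty$, the left side increases to $\bar{\ee}\big[\prod_k I_{O_k}(X_k)\big]=\vv\big(\bigcap_k\{X_k\in O_k\}\big)$ while each factor on the right increases to $\bar{\ee}[I_{O_k}(X_k)]=\vv(X_k\in O_k)$; here the interchange of limit and expectation is justified by continuity from below of $\bar{\ee}=\sup_{P\in\PP}\mathbf{E}_P$, which holds since the monotone convergence theorem applies under each $P$ and increasing suprema commute ($\sup_j\sup_P=\sup_P\sup_j$). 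As only finitely many factors bounded by $1$ are involved, the product of the limits is the limit of the products, and the first identity follows.

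For the closed-set statement I would instead approximate from above, taking $f_{k,j}(x)=(1-j\,\dist(x,F_k))^{+}$, so that $f_{k,j}\in\CC_{l,Lip}(\rr)$, $0\le f_{k,j}\le 1$, and $f_{k,j}\downarrow I_{F_k}$. The multiplicativity identity again yields $\bar{\ee}\big[\prod_k f_{k,j}(X_k)\big]=\prod_k\bar{\ee}[f_{k,j}(X_k)]$ for every $j$, and letting $j\to\infty$ would conclude the proof provided one may pass the decreasing limit through $\bar{\ee}$, that is, $\bar{\ee}[f_{k,j}(X_k)]\downarrow\vv(X_k\in F_k)$ and $\bar{\ee}\big[\prod_k f_{k,j}(X_k)\big]\downarrow\vv\big(\bigcap_k\{X_k\in F_k\}\big)$. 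This continuity from above is the main obstacle: in contrast to the increasing case, $\sup_{P}$ and $\lim_j$ need not commute for decreasing sequences, and the limiting indicators generally fall outside $\HH$. I would resolve it through the regularity of $\ee$ (continuity from above at $0$) together with the weak compactness of $\PP$ furnished by Theorem 1.2.2 of Peng \cite{peng2019nonlinear}. Concretely, one can squeeze $I_{F_k}\le f_{k,j}\le I_{O_k^{(j)}}$ with the open neighbourhoods $O_k^{(j)}=\{x:\dist(x,F_k)<1/j\}\downarrow F_k$, whence $\vv(X_k\in F_k)\le\bar{\ee}[f_{k,j}(X_k)]\le\vv(X_k\in O_k^{(j)})$ and likewise for the products; the desired limits then follow from $\vv(X_k\in O_k^{(j)})\downarrow\vv(X_k\in F_k)$ and $\vv\big(\bigcap_k\{X_k\in O_k^{(j)}\}\big)\downarrow\vv\big(\bigcap_k\{X_k\in F_k\}\big)$, which are precisely the continuity-from-above statements secured by regularity. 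With this in hand the decreasing limits pass through, and the second product identity follows as before.
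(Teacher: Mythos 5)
Your proposal is correct in substance, but it takes a different route from the paper only in the sense that the paper gives no argument at all: its entire proof is the one-line citation ``set $\varphi_k=I_{O_k}$ and $\psi_k=I_{F_k}$ in Lemma 1.1 of Gao and Xu \cite{gao2011large}.'' What you have written is essentially a reconstruction of the proof of that cited lemma: factorization of $\ee$ over products of nonnegative bounded Lipschitz functions of independent coordinates via the independence identity plus positive homogeneity, then monotone approximation of $I_{O_k}$ from below and of $I_{F_k}$ from above. This buys the reader a self-contained argument where the paper outsources everything, and your explicit treatment of why the increasing limit commutes with $\bar{\ee}=\sup_{P\in\PP}\mathbf{E}_P$ (sup--sup interchange plus monotone convergence under each $P$) is exactly the right justification for the open-set case.

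The one soft spot is the closed-set case, and you correctly identify it as the crux. Be aware that the regularity hypothesis on $\ee$ as stated in the paper (continuity from above at $0$ along sequences \emph{in} $\HH$) does not by itself give $\bar{\ee}[f_{k,j}(X_k)]\downarrow\vv(X_k\in F_k)$, since the decreasing limit $I_{F_k}(X_k)$ is neither $0$ nor an element of $\HH$; likewise your squeeze reduces the problem to $\vv\bigl(X_k\in O_k^{(j)}\bigr)\downarrow\vv(X_k\in F_k)$, which is a continuity-from-above statement for the capacity along closed neighbourhoods $\{x:\dist(x,F_k)\le 1/j\}\downarrow F_k$ and genuinely requires the weak compactness of the representing family $\PP$ (via a portmanteau/compactness argument on the pushforward distributions), not just regularity. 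You do invoke Theorem 1.2.2 of Peng \cite{peng2019nonlinear} for this, which is the right tool, but the step deserves either a short explicit compactness argument or a precise citation (the continuity of $\vv$ on decreasing closed sets for weakly compact $\PP$ is itself a lemma in the literature). With that point nailed down, your proof is complete and is, in effect, the proof the paper delegates to Gao and Xu.
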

	\begin{proof}By setting $\varphi_k=I_{O_k}$ and $\psi_k=I_{F_k}$, $k\ge 1$ in Lemma 1.1 of Gao and Xu \cite{gao2011large}, we finish the proof.
	\end{proof}
	
	In the paper we suppose that $\ee$ is countably sub-additive, i.e., $\ee(X)\le \sum_{n=1}^{\infty}\ee(X_n)$, whenever $X\le \sum_{n=1}^{\infty}X_n$, $X,X_n\in \HH$, and $X\ge 0$, $X_n\ge 0$, $n=1,2,\ldots$. 
	Let $C$  represents positive constant which may change from place to place. $a\bigvee b$ stands for the maximum of $a$ and $b$.

\section{Main results}
		\setcounter{equation}{0}
		Our main result is as below.
\begin{thm}\label{thm01}
		Let $\alpha>1$, and assume that $X, X_1, X_2,\ldots$ are independent, identically distributed (i. i. d.) random variables with partial sums 
		$S_n=\sum_{k=1}^{n}X_k$, $n\ge 1$. Suppose that 
		\begin{equation}\label{eq01}
		\ee\left\{\exp\left\{\left(\log^{+}|X|\right)^{\alpha}\right\}\right\}\le C_{\vv}\left\{\exp\left\{\left(\log^{+}|X|\right)^{\alpha}\right\}\right\}<\infty,\ 
		\mbox{and}\  \ee(X)=\ee(-X)=0. 
		\end{equation}
		Then
\begin{equation}\label{eq02}
\sum_{n=1}^{\infty}\exp\left\{(\log n)^{\alpha}\right\}\frac{(\log n)^{\alpha-1}}{n^2}\vv\left\{|S_n|>n\varepsilon\right\}<\infty \ 
\mbox{ for all $\varepsilon>1$.}
\end{equation}

\end{thm}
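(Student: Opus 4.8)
The plan is to bound $\vv(|S_n|>n\varepsilon)$ by three separately summable pieces obtained from a \emph{two–scale} truncation, the design of which is forced by the competition between the exponential (Chernoff) estimate and the single–big–jump heuristic. First I would record the only consequence of \eqref{eq01} that is actually needed: since $C_\vv(Z)\ge a\,\vv(Z>a)$ for $Z\ge 0$, the Markov inequality for the Choquet integral applied to $Z=\me^{(\log^+|X|)^\alpha}$ gives
\begin{equation}
\vv(|X|>y)\le C\,\me^{-(\log y)^\alpha}\qquad(y\ge 1),\notag
\end{equation}
whence every polynomial moment is finite and in particular $\sigma^2:=\ee[X^2]<\infty$ (as $\me^{-(\log y)^\alpha}$ beats any power for $\alpha>1$). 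All subsequent tail work uses this bound.

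Next, fix $\delta$ with $1<\delta<\varepsilon$, truncate each $X_k$ at level $\delta n$ to get $\bar X_k$, and put $\bar S_n=\sum_{k\le n}\bar X_k$. Since $\{|S_n|>n\varepsilon\}\subseteq\{\max_{k\le n}|X_k|>\delta n\}\cup\{|\bar S_n|>n\varepsilon\}$, sub–additivity yields
\[
\vv(|S_n|>n\varepsilon)\le n\,\vv(|X|>\delta n)+\vv(|\bar S_n|>n\varepsilon).
\]
For the single–big–jump term the weight factor $\me^{(\log n)^\alpha}$ and the tail factor $\me^{-(\log(\delta n))^\alpha}$ almost cancel; because $\delta>1$ one has $(\log(\delta n))^\alpha-(\log n)^\alpha\sim\alpha(\log\delta)(\log n)^{\alpha-1}>0$, so the summand is dominated by $\me^{-\alpha(\log\delta)(\log n)^{\alpha-1}}(\log n)^{\alpha-1}/n$, and after the substitution $u=\log n$ this becomes $\int\me^{-cu^{\alpha-1}}u^{\alpha-1}\,du<\infty$ since $\alpha-1>0$. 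This is exactly where $\varepsilon>1$ enters: for $\delta=1$ the cancellation would be exact and $\sum(\log n)^{\alpha-1}/n$ diverges.

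For the truncated sum I would center, $\tilde X_k=\bar X_k-\ee\bar X_k$ (the corrections $n|\ee\bar X_k|\to0$ by $\ee X=\ee(-X)=0$ and finiteness of the moments, so the threshold may be shrunk to some $\varepsilon'\in(\delta,\varepsilon)$), and then split each $\tilde X_k$ at a \emph{much lower} level $L_n\asymp n/(\log n)^\alpha$, writing $\tilde S_n=U_n+V_n$ with $U_n$ the parts $\le L_n$ and $V_n$ the exceedances. With $\kappa\in(\delta/\varepsilon',1)$ one uses $\vv(\tilde S_n>n\varepsilon')\le\vv(U_n>(1-\kappa)n\varepsilon')+\vv(V_n>\kappa n\varepsilon')$. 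The bulk term is handled by Lemma \ref{lem01}: rescaling by $t=1/L_n$ makes each summand $\le1$, independence lets $\ee\me^{tU_n}\le\prod_k\ee\me^{t\tilde X_k I}\le\me^{nt^2\sigma^2}$, and tuning $L_n$ so that $t(1-\kappa)n\varepsilon'=(\log n)^\alpha$ gives $\vv(U_n>(1-\kappa)n\varepsilon')\le\me^{-(\log n)^\alpha(1-o(1))}$, whose product with the weight is $O((\log n)^{\alpha-1}/n^2)$. For $V_n$, each exceedance is $\le\delta n<\kappa n\varepsilon'$, so $\{V_n>\kappa n\varepsilon'\}$ forces at least two indices $k$ with $\tilde X_k>L_n$; Lemma \ref{lem02} bounds its capacity by $\binom{n}{2}\vv(\tilde X>L_n)^2\le n^2\vv(|X|>cL_n)^2$, and the squaring defeats the sub–exponential correction $\me^{+\alpha^2(\log n)^{\alpha-1}\log\log n}$ arising from $L_n\asymp n/(\log n)^\alpha$, leaving an $\me^{-(\log n)^\alpha}$–type summand. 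The lower tail is identical by $\ee(-X)=0$.

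The main obstacle is the design of the decomposition, namely reconciling two incompatible demands: the Chernoff bound produces the required $\me^{-(\log n)^\alpha}$ decay only when the truncation is as low as $n/(\log n)^\alpha$, yet a \emph{single}–exceedance estimate at that level is not summable against the weight $\me^{(\log n)^\alpha}(\log n)^{\alpha-1}/n^2$. The resolution — cutting the genuine big jumps at the higher level $\delta n$ with $1<\delta<\varepsilon$, and arranging through $\kappa>\delta/\varepsilon'$ that the residual exceedances at level $n/(\log n)^\alpha$ occur at least \emph{twice} — is the crux of the argument; the remaining technical point is checking that the centering corrections are negligible and that the hypotheses $X\le1$, $\ee X\le0$ of Lemma \ref{lem01} survive the double truncation.
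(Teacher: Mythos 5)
Your proposal is correct and follows essentially the same route as the paper: a two-scale truncation at levels $\asymp n/(\log n)^{\alpha}$ and $\asymp n$, the exponential inequality of Lemma \ref{lem01} for the bulk, the independence factorization of Lemma \ref{lem02} to square the tail on the event of two or more medium exceedances, and the moment hypothesis for the single big jump. The only (harmless) deviations are that you dispose of the big-jump term by the direct Markov bound $\vv(|X|>y)\le C\me^{-(\log y)^{\alpha}}$ together with the expansion $(\log(\delta n))^{\alpha}-(\log n)^{\alpha}\sim\alpha(\log\delta)(\log n)^{\alpha-1}$ rather than via the series--moment equivalence of Lemma \ref{lem2.1}, and that by peeling off the big jumps first you make a single medium exceedance incapable of causing the event, so you never need the paper's separate estimate of the ``exactly one exceedance'' set $A_{n,2}'$.
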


\section{Proof of Theorem \ref{thm01}}
\begin{proof}
(\ref{eq01})$\Rightarrow$(\ref{eq02}): Let $0<\delta<1$ and $\varepsilon>0$ be arbitrary, write, for $n\ge 1$,
\begin{align}\label{05}
b_n =\frac{\varepsilon n}{2(\log n)^{\alpha}}, \mbox{ and } c_n=n\varepsilon(1-\delta).
\end{align}
Set, for $1\le k\le n$,
\begin{align}
&X_k'=X_kI\left\{|X_k|\le b_n\right\}+b_nI\{X_k>b_n\}-b_nI\{X_k<-b_n\}\nonumber\\
&X_k''= (X_k-b_n)I\left\{b_n<X_k<c_n\right\}+(c_n-b_n)I\left\{X_k\ge c_n\right\}\nonumber\\
&\quad \quad+(X_k+b_n)I\left\{-c_n<X_k<-b_n\right\}+(-c_n+b_n)I\left\{X_k\le -c_n\right\}\nonumber\\
&X_k'''=(X_k-c_n)I\{X_k>c_n\}+(c_n+X_k)I\{X_k<-c_n\}\nonumber
\end{align}
and $X_k=X_k'+X_k''+X_k'''$, and let all objects with primes and multiple primes refer to the respective truncated summands (throughout this paper 
$\log n=\log^{+}n$).

Next, write $A_n=\{|S_n|> n\varepsilon\}$,
\begin{align*}
&A_n'=\left\{|S_n|>n\varepsilon \mbox{  and $X_k''\not=0$ for at most one $k\le n$ and $X_k'''=0$ for all $k\le n$}\right\}\\
&A_n''=\left\{X_k''\not=0 \mbox{  for at least two $k\le n$}\right\},\\
&A_n'''=\left\{X_k'''\not=0 \mbox{ at least one $k\le n$}\right\}.
\end{align*}
We moreover split $A_n'$ into $A_{n,1}'\bigcup A_{n,2}'$, where
\begin{align*}
&A_{n,1}'=\left\{|S_n|>n\varepsilon \mbox{ and $X_k\le b_n$ for all $k\le n$}\right\},\\
&A_{n,2}'=\left\{|S_n|>n\varepsilon \mbox{ and $X_k>b_n$ for exactly one $k\le n$}\right\},
\end{align*}
and obeserve that 
\begin{equation}\label{06}
A_n\subset A_{n,1}'\bigcup A_{n,2}'\bigcup A_n''\bigcup A_n''',
\end{equation}
which implies that 
\begin{equation}\label{07}
	\vv\left\{|S_n|>n\varepsilon\right\}=\vv(A_n)\le \vv(A_{n,1}')+\vv(A_{n,2}')+\vv(A_n'')+\vv(A_n''')\end{equation}

$\bullet \vv(A_{n,1}')$\\
Since truncation destroys centering, we see that
\begin{align*}
|\ee(S_n')|\bigvee|\ee(-S_n')|&=|n\ee(X_k')|\bigvee|n\ee(-X_k')|=n\left(\left|\ee(X_k'-X_k)\right|\bigvee\left|\ee(-X_k'+X_k)\right|\right)\\
&\le n\ee|X-X'|\le nC_{\vv}\left\{|X|I\left\{|X|>b_n\right\}\right\}\\
&\le \frac{nC_{\vv}\left\{|X|^2I\left\{|X|>b_n\right\}\right\}}{b_n}=\frac{2(\log n)^{\alpha}}{\varepsilon}C_{\vv}\left\{|X|^2I\left\{|X|>b_n\right\}\right\}\\
&=o\left((\log n)^{\alpha}\right) \mbox{ as $n\rightarrow\infty$.}
\end{align*}
Hence, by the exponential bound and Lemma \ref{lem01}, we get, for $n\ge n_0$ large,
\begin{align}\label{08}
	\vv\left\{A_{n,1}'\right\}&=\vv\left\{S_n'>n\varepsilon\right\}+\vv\left\{-S_n'>n\varepsilon\right\}\nonumber\\
	&\le \vv\left\{S_n'-\ee S_n'>(n-\delta(\log n)^{\alpha}\varepsilon)\right\}+\vv\left\{-S_n'-\ee (-S_n')>(n-\delta(\log n)^{\alpha}\varepsilon)\right\}\nonumber\\
	&\le C\exp\left\{-\frac{(\log n)^{\alpha}}{n\varepsilon}(n-\delta(\log n)^{\alpha}\varepsilon)+\frac{(\log n)^{2\alpha}}{n^2\varepsilon^2}n\ee(X_1^2)\right\}\nonumber\\
	&\le C\exp\left\{-(\log n)^{\alpha}\right\},
\end{align}
and therefore that
\begin{align}\label{09}
	\sum_{n\ge n_0}\exp\left((\log n)^{\alpha}\right)\frac{(\log n)^{\alpha-1}}{n^2}\vv\left(A_{n,1}'\right)\le C\sum_{n\ge n_0}\frac{(\log n)^{\alpha-1}}{n^2}<\infty.
\end{align}

$\bullet \vv(A_{n,2}')$\\
First obeserve that 
\begin{align*}
n\vv\left\{|X|>b_n\right\}\le n\frac{\ee\left(\exp\left\{\left(\log^{+}|X|\right)^{\alpha}\right\}\right)}{\exp\left\{(\log b_n)^{\alpha}\right\}},
\end{align*}
which, together with the fact that for large $n_1$, $n\ge n_1$,
\begin{align*}
(\log b_n)^{\alpha}=\left(\log\left(\varepsilon+\log n-\alpha\log\log n-\log 2\right)\right)^{\alpha}\ge (1-\delta/2)(\log n)^{\alpha},
\end{align*}
yields that
\begin{align}\label{10}
	n\vv\left\{|X|>b_n\right\}\le Cn\exp\left(- (1-\delta/2)(\log n)^{\alpha}\right)\rightarrow 0 \mbox{  as $n\rightarrow\infty$.}
\end{align}
Next, by Lemma \ref{lem02}, (\ref{08}) and (\ref{10}), we get, for $n\ge n_1$,
\begin{align*}
	\vv(A_{n,2}')&\le \vv\left\{|S_{n-1}'|>\varepsilon n-c_n\right\}n\vv\left\{|X|>b_n\right\}\\
	&\le \vv\left\{|S_{n-1}'|>\varepsilon\delta n\right\}n\vv\left\{|X|>b_n\right\}\le Cn\exp\left\{-\delta(\log n)^{\alpha}-(1-\delta/2)(\log n)^{\alpha}\right\},
\end{align*}
which yields that
\begin{align*}
	\sum_{n\ge n_1}\exp\left((\log n)^{\alpha}\right)\frac{(\log n)^{\alpha-1}}{n^2}\vv\left(A_{n,2}'\right)\le C\sum_{n\ge n_1}\frac{(\log n)^{\alpha-1}}{n^{(1+\delta/2)}}<\infty.
\end{align*}

$\bullet \vv(A_{n}'')$\\
By (\ref{10}) and Lemma \ref{lem02}, we see that
\begin{align}\label{11}
	\sum_{n\ge n_1}\exp\left((\log n)^{\alpha}\right)\frac{(\log n)^{\alpha-1}}{n^2}\vv\left(A_{n}''\right)\le C\sum_{n\ge n_1}\exp\left\{-(1-\delta)(\log n)^{\alpha}\right\}(\log n)^{\alpha}. 
\end{align}

$\bullet \vv(A_n''')$\\
Since 
\begin{align*}
\vv(A_n''')\le n\vv\left\{|X|>c_n\right\}=n\vv\left\{|X|>n\varepsilon(1-\delta)\right\},
\end{align*}
we conclude that
\begin{align*}
	\sum_{n\ge n_1}\exp\left((\log n)^{\alpha}\right)\frac{(\log n)^{\alpha-1}}{n^2}\vv\left(A_{n}'''\right)\le \sum_{n\ge n_1}\exp\left((\log n)^{\alpha}\right)\frac{(\log n)^{\alpha-1}}{n}\vv\left\{|X|>n\varepsilon(1-\delta)\right\}
\end{align*}
	and the latter sum converges iff $\varepsilon(1-\delta)\ge 1$ by Lemma \ref{lem2.1} below.
\end{proof}	

\begin{lem}\label{lem2.1}
For any random variable $X$ and $\gamma>0$,
\begin{align*}
	C_{\vv}\left\{\exp\left\{\left(\log^{+}(X/\gamma)\right)^{\alpha}\right\}\right\}<\infty\Longleftrightarrow \sum_{n=1}^{\infty}\exp\left\{(\log n)^{\alpha}\right\}\frac{(\log n)^{\alpha-1}}{n}\vv\left(|X|>n\gamma\right)<\infty.
\end{align*}
\end{lem}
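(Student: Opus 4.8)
The plan is to reduce the Choquet-type integral on the left to an ordinary integral in $t=|X|/\gamma$, and then to pass from that integral to the series by a standard integral test. Throughout set $Y=|X|/\gamma$ and $Z=\exp\{(\log^{+}Y)^{\alpha}\}$, interpreting $\log^{+}(X/\gamma)$ as $\log^{+}(|X|/\gamma)$. Since $Z\ge 1\ge 0$, the second integral in the definition of $C_{\vv}$ vanishes and $C_{\vv}(Z)=\int_{0}^{\infty}\vv(Z>x)\,\dif x$. I would split this as $\int_{0}^{1}+\int_{1}^{\infty}$; the first piece is at most $1$ and harmless. For $x>1$ one has $\{Z>x\}=\{(\log^{+}Y)^{\alpha}>\log x\}=\{Y>\exp((\log x)^{1/\alpha})\}$, because $\log x\ge 0$ and $\alpha>0$. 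Hence finiteness of $C_{\vv}(Z)$ is equivalent to finiteness of $\int_{1}^{\infty}\vv\big(Y>\exp((\log x)^{1/\alpha})\big)\,\dif x$.

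Next I would perform the substitution $x=\exp\{(\log t)^{\alpha}\}$, equivalently $t=\exp((\log x)^{1/\alpha})$, which maps $[1,\infty)$ bijectively onto $[1,\infty)$. Writing $G(t)=\exp\{(\log t)^{\alpha}\}$ and $g(t)=G'(t)/\alpha=\exp\{(\log t)^{\alpha}\}(\log t)^{\alpha-1}/t$, one gets $\dif x=\alpha\,g(t)\,\dif t$, so that
\[
C_{\vv}(Z)<\infty\iff \int_{1}^{\infty} g(t)\,\vv(Y>t)\,\dif t<\infty .
\]
The series in the statement is exactly $\sum_{n\ge 1} g(n)\,\vv(|X|>n\gamma)=\sum_{n\ge1}g(n)\,\vv(Y>n)$, so it remains to show that this integral and this series are finite together.

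For the integral test I would use that $t\mapsto \vv(Y>t)$ is nonincreasing (monotonicity of the capacity $\vv$). Splitting $\int_{1}^{\infty}=\sum_{n\ge1}\int_{n}^{n+1}$ and bounding $\vv(Y>t)$ on $[n,n+1]$ between $\vv(Y>n+1)$ and $\vv(Y>n)$ reduces everything to comparing $\int_{n}^{n+1}g=\frac{1}{\alpha}\big(G(n+1)-G(n)\big)$ with $g(n)$. The crucial tameness estimate is $G(n+1)/G(n)=\exp\{(\log(n+1))^{\alpha}-(\log n)^{\alpha}\}\to 1$, since $(\log(n+1))^{\alpha}-(\log n)^{\alpha}\sim \alpha(\log n)^{\alpha-1}/n\to 0$; this yields $G(n+1)-G(n)\asymp G(n)\,\alpha(\log n)^{\alpha-1}/n\asymp \alpha\,g(n)$ for large $n$, and likewise $g(n)\asymp g(n+1)$. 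Consequently there are constants $0<c\le C<\infty$ and an $N$ with $c\,g(n)\le\int_{n}^{n+1}g\le C\,g(n)$ for $n\ge N$, giving
\[
\sum_{n\ge N} c\,g(n)\,\vv(Y>n+1)\ \le\ \int_{N}^{\infty} g(t)\,\vv(Y>t)\,\dif t\ \le\ \sum_{n\ge N} C\,g(n)\,\vv(Y>n).
\]
The right inequality gives ``series finite $\Rightarrow$ integral finite''; the left one, after reindexing and using $g(n)\asymp g(n+1)$, gives the converse. I expect the only real obstacle to be this last step, namely establishing the comparison $\int_{n}^{n+1}g\asymp g(n)$ rigorously: $g$ is \emph{not} monotone, so one must lean on the regular-variation-type asymptotics $G(n+1)/G(n)\to1$ and $g(n+1)/g(n)\to1$ rather than on a naive monotone integral comparison. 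Everything else is bookkeeping.
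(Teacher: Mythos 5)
Your argument is correct. The paper itself does not prove Lemma \ref{lem2.1}; it only points to Lemma 2.2 of Zhong and Wu, so there is no in-text proof to compare against. Your route --- writing $C_{\vv}$ as the tail integral $\int_0^\infty \vv(Z>x)\,\dif x$, changing variables via $x=\exp\{(\log t)^{\alpha}\}$, and then running an integral test based on the monotonicity of $t\mapsto\vv(Y>t)$ together with the slow-variation estimate $G(n+1)-G(n)\sim \alpha\,\exp\{(\log n)^{\alpha}\}(\log n)^{\alpha-1}/n$ --- is exactly the standard mechanism behind such Zhong--Wu-type equivalences (there one usually slices $\int_0^\infty\vv(Z>x)\,\dif x$ over the intervals $[G(n),G(n+1)]$ rather than substituting, which is the same computation). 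You correctly identify the one genuinely non-routine point, namely that $g$ is not monotone so the comparison $\int_n^{n+1}g\asymp g(n)$ must come from $G(n+1)/G(n)\to1$ and $g(n+1)/g(n)\to1$ rather than from a naive monotone integral test, and your handling of it is sound. In short, your proposal supplies a complete proof of a step the paper leaves to a citation.
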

\begin{proof}
The proof is similar to that of Lemma 2.2 of Zhong and Wu \cite{zhong2017complete}, and hence is omitted here.
\end{proof}
\section{Random fields}
In order to describe the scene, let $\mathbf{Z}_{+}^d$, $d\ge 2$, denote the positive integer $d$-dimensional 
lattice with coordinate-wise partial ordering $\le$, that is, for $\mathbf{m}=(m_1,m_2,\ldots,m_d)$ and $\mathbf{n}=(n_1,n_2,\ldots,n_d)$,
 $\mathbf{m}\le \mathbf{n}$ means that $m_k\le n_k$, for $k=1,2,\ldots, d$. The size of a poinit means $|\mathbf{n}|=\prod_{k=1}^{d}n_k$, and 
 $\mathbf{n}\rightarrow\infty$ represents that $n_k\rightarrow\infty$, for all  $k=1,2,\ldots, d$.
In the following Theorem \ref{thm01} is extended to multi-index models or random fields.
\begin{thm}\label{thm02}
Let $\alpha>1$, and assume that $\{X_{\mathbf{k}},\mathbf{k}\in \mathbf{Z}_{+}^d\}$ are i. i. d. random variables with $\ee(X)=\ee(-X)=0$ and 
partial sums $S_{\mathbf{n}}=\sum_{\mathbf{k}\le \mathbf{n}}X_{\mathbf{k}}$. If 
\begin{align*}
	C_{\vv}\left\{\exp\left\{\left(\log|X|\right)^{\alpha}
	\right\}\left(\log^{+}|X|\right)^{d-1}\right\}<\infty
\end{align*}
holds, then we have
\begin{align*}
\sum_{\mathbf{n}}\exp\left\{(\log |\mathbf{n}|)^{\alpha}\right\}\frac{(\log |\mathbf{n}|)^{\alpha-1}}{|\mathbf{n}|^2}\vv\left(|S_{\mathbf{n}}|>|\mathbf{n}|\varepsilon\right)<\infty \mbox{ for all $\varepsilon>1$.}
\end{align*}
\end{thm}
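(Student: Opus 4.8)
The plan is to transcribe the four-term decomposition used for Theorem~\ref{thm01} to the lattice, replacing the scalar index $n$ by the size $|\mathbf{n}|=\prod_{k=1}^{d}n_k$ throughout. Fix $0<\delta<1$ with $\varepsilon(1-\delta)>1$ and set
$$
b_{\mathbf{n}}=\frac{\varepsilon|\mathbf{n}|}{2(\log|\mathbf{n}|)^{\alpha}},\qquad c_{\mathbf{n}}=|\mathbf{n}|\varepsilon(1-\delta),
$$
truncating each $X_{\mathbf{k}}$ at the levels $b_{\mathbf{n}}$ and $c_{\mathbf{n}}$ exactly as before to produce $X_{\mathbf{k}}'$, $X_{\mathbf{k}}''$, $X_{\mathbf{k}}'''$. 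With $A_{\mathbf{n}}=\{|S_{\mathbf{n}}|>|\mathbf{n}|\varepsilon\}$ I would form the events $A_{\mathbf{n},1}'$, $A_{\mathbf{n},2}'$, $A_{\mathbf{n}}''$, $A_{\mathbf{n}}'''$ verbatim (with $|\mathbf{n}|$ summands in place of $n$), so that $\vv(A_{\mathbf{n}})\le\vv(A_{\mathbf{n},1}')+\vv(A_{\mathbf{n},2}')+\vv(A_{\mathbf{n}}'')+\vv(A_{\mathbf{n}}''')$. The centering estimate, the exponential inequality of Lemma~\ref{lem01}, and the product formula of Lemma~\ref{lem02} depend only on the number of summands and on $|\mathbf{n}|$, so the per-term bounds
$$
\vv(A_{\mathbf{n},1}')\le C\exp\{-(\log|\mathbf{n}|)^{\alpha}\},\qquad |\mathbf{n}|\,\vv(|X|>b_{\mathbf{n}})\le C|\mathbf{n}|\exp\{-(1-\delta/2)(\log|\mathbf{n}|)^{\alpha}\}
$$
carry over with $|\mathbf{n}|$ replacing $n$, and likewise for $\vv(A_{\mathbf{n},2}')$ and $\vv(A_{\mathbf{n}}'')$.

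The genuinely new point is the summation over the lattice. Writing $\tau_d(N)=\#\{\mathbf{n}\in\mathbf{Z}_{+}^d:|\mathbf{n}|=N\}$ for the number of ordered factorisations of $N$ into $d$ factors, every series $\sum_{\mathbf{n}}g(|\mathbf{n}|)$ regroups as $\sum_{N\ge1}\tau_d(N)g(N)$, and I would control it through the Dirichlet asymptotics $\sum_{N\le x}\tau_d(N)\sim x(\log x)^{d-1}/(d-1)!$ together with Abel summation. For the three terms $A_{\mathbf{n},1}'$, $A_{\mathbf{n},2}'$, $A_{\mathbf{n}}''$ this regrouping is harmless: after inserting the bounds above, the summand is either $O(N^{-2}(\log N)^{\alpha-1})$ (for $A_{\mathbf{n},1}'$) or decays like $\exp\{-c(\log N)^{\alpha}\}$ with $c>0$ (for $A_{\mathbf{n},2}'$ and $A_{\mathbf{n}}''$), and since $\alpha>1$ the slowly growing weight $\tau_d(N)$ cannot destroy summability; the three resulting series are finite exactly as in \eqref{09}--\eqref{11}.

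The crux is the term $A_{\mathbf{n}}'''$. From $\vv(A_{\mathbf{n}}''')\le|\mathbf{n}|\,\vv(|X|>c_{\mathbf{n}})$ the corresponding series is
$$
\sum_{\mathbf{n}}\exp\{(\log|\mathbf{n}|)^{\alpha}\}\frac{(\log|\mathbf{n}|)^{\alpha-1}}{|\mathbf{n}|}\vv\left(|X|>|\mathbf{n}|\varepsilon(1-\delta)\right),
$$
and here convergence must be equivalent to the hypothesis. The main obstacle is therefore to establish the $d$-index analogue of Lemma~\ref{lem2.1}, namely that for every $\gamma>0$
$$
C_{\vv}\left\{\exp\{(\log^{+}(X/\gamma))^{\alpha}\}(\log^{+}(X/\gamma))^{d-1}\right\}<\infty\Longleftrightarrow\sum_{\mathbf{n}}\exp\{(\log|\mathbf{n}|)^{\alpha}\}\frac{(\log|\mathbf{n}|)^{\alpha-1}}{|\mathbf{n}|}\vv(|X|>|\mathbf{n}|\gamma)<\infty.
$$
The extra factor $(\log^{+}|X|)^{d-1}$ in the moment condition is precisely what the multiplicity produces, via $\sum_{N\le x}\tau_d(N)\sim x(\log x)^{d-1}/(d-1)!$, when $\sum_{\mathbf{n}}$ is collapsed to $\sum_N\tau_d(N)(\cdots)$; proving this equivalence amounts to running the one-dimensional argument of Lemma~\ref{lem2.1} (Lemma 2.2 of Zhong and Wu~\cite{zhong2017complete}) against the weight $\tau_d$ and invoking the Dirichlet asymptotics, which is where the real work lies. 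Granting this lemma, for $\varepsilon>1$ I would choose $\delta$ so small that $\gamma:=\varepsilon(1-\delta)>1$; then $C_{\vv}\{\exp\{(\log^{+}(X/\gamma))^{\alpha}\}(\log^{+}(X/\gamma))^{d-1}\}\le C_{\vv}\{\exp\{(\log|X|)^{\alpha}\}(\log^{+}|X|)^{d-1}\}<\infty$ by hypothesis, so the $A_{\mathbf{n}}'''$-series converges and, combined with the other three, yields the stated conclusion.
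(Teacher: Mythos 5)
Your proposal is correct and follows essentially the same route as the paper, which itself only sketches the argument by referring to Gut's multi-index method, the proof of Theorem \ref{thm01}, and Lemma \ref{lem3.1}: you transplant the four-event decomposition with $n$ replaced by $|\mathbf{n}|$, collapse the lattice sum via the divisor-counting asymptotics $\sum_{N\le x}\tau_d(N)\sim x(\log x)^{d-1}/(d-1)!$, and correctly identify that the extra factor $(\log^{+}|X|)^{d-1}$ in the moment hypothesis is exactly what is needed to absorb the multiplicity in the $A_{\mathbf{n}}'''$ term, which is precisely the content of the paper's Lemma \ref{lem3.1}.
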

\begin{proof}
The proof is similar  adaptation of those in Gut \cite{gut1978} combined with that of Theorem \ref{thm01} and Lemma \ref{lem3.1} below, and hence is omitted here.
\end{proof}
\begin{lem}\label{lem3.1}
	If for any random variables $\gamma>0$,  
\begin{align*}
	C_{\vv}\left\{\exp\left\{\left(\log^{+}|X/\gamma|\right)^{\alpha}\right\}\left(\log^{+}|X|\right)^{d-1}\right\}<\infty,
\end{align*}
holds, then we have
\begin{align*}
	\sum_{\mathbf{n}}\exp\left\{(\log|\mathbf{n}|)^{\alpha}\right\}\frac{(\log|\mathbf{n}|)^{\alpha-1}}{|\mathbf{n}|}\vv\left(|X|>|\mathbf{n}|\gamma\right)<\infty.
\end{align*}
\end{lem}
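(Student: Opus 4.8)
The plan is to reduce the $d$-dimensional sum to a one-dimensional weighted series and then recognize that series as the Choquet integral appearing in the hypothesis, exactly as in the proof of Lemma \ref{lem2.1}. Since the summand depends on $\mathbf{n}$ only through the product $|\mathbf{n}|=\prod_{k=1}^{d}n_k$, I would first group the lattice points according to the common value $m=|\mathbf{n}|$ and write
\[\sum_{\mathbf{n}}\exp\left\{(\log|\mathbf{n}|)^{\alpha}\right\}\frac{(\log|\mathbf{n}|)^{\alpha-1}}{|\mathbf{n}|}\vv\left(|X|>|\mathbf{n}|\gamma\right)=\sum_{m=1}^{\infty}d_d(m)\,h(m)\,\vv\left(|X|>m\gamma\right),\]
where $h(m)=\exp\left\{(\log m)^{\alpha}\right\}(\log m)^{\alpha-1}/m$ and $d_d(m)=\#\{\mathbf{n}\in\mathbf{Z}_{+}^{d}:\prod_k n_k=m\}$ is the $d$-fold divisor function. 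Note that only the direction ``moment condition $\Rightarrow$ series finite'' is needed, so only an upper bound on this sum is required. The extra factor $(\log^{+}|X|)^{d-1}$ in the hypothesis is precisely what should compensate for the weight introduced by $d_d$.

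The decisive number-theoretic input is the classical asymptotic for the summatory divisor function, $D_d(x):=\sum_{m\le x}d_d(m)=\frac{x(\log x)^{d-1}}{(d-1)!}(1+o(1))$, whence $D_d(x)\asymp x(\log x)^{d-1}$ for $x\ge 2$. Using this together with Abel summation, and crucially the monotonicity of the tail $m\mapsto\vv(|X|>m\gamma)$, I would replace the erratic weight $d_d(m)$ by its smooth average to obtain
\[\sum_{m}d_d(m)h(m)\vv\left(|X|>m\gamma\right)\le C\sum_{m}(\log m)^{d-1}h(m)\vv\left(|X|>m\gamma\right)+C.\]
Concretely, one first shows the partial-sum comparison $\sum_{m\le M}d_d(m)h(m)\le C\sum_{m\le M}(\log m)^{d-1}h(m)$ for every $M$, and then feeds it into Abel summation against the non-increasing factor $\vv(|X|>m\gamma)$, where the nonnegativity of the increments keeps all error terms controlled.

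It then remains to identify the one-dimensional series $\sum_m \exp\{(\log m)^{\alpha}\}(\log m)^{\alpha+d-2}m^{-1}\vv(|X|>m\gamma)$ with the hypothesis. For this I would repeat the layer-cake and change-of-variables computation underlying Lemma \ref{lem2.1} (Lemma 2.2 of Zhong and Wu \cite{zhong2017complete}): with $g(u)=\exp\left\{(\log^{+}(u/\gamma))^{\alpha}\right\}(\log^{+}u)^{d-1}$ one has $C_{\vv}\{g(|X|)\}=\int_{0}^{\infty}\vv(|X|>u)\,g'(u)\,du$, and the leading term of $g'(u)$ along $u=m\gamma$ matches the summand above up to multiplicative constants. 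Comparing the series with the integral via the monotonicity of $\vv$ shows the series converges exactly when $C_{\vv}\{g(|X|)\}<\infty$, which is the assumption.

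The step I expect to be the main obstacle is the second one, the passage from $\sum_m d_d(m)a_m$ to $\sum_m(\log m)^{d-1}a_m$. It cannot be done term by term, since $d_d(m)$ equals $d$ at primes yet exceeds any fixed power of $\log m$ at highly composite $m$, agreeing with $(\log m)^{d-1}$ only on average. Worse, the companion factor $\exp\{(\log m)^{\alpha}\}$ grows faster than every power of $m$, so the naive bound $\sum_{m\le M}d_d(m)h(m)\le D_d(M)h(M)$ loses a spurious factor $(\log M)^{\alpha-1}$ and is useless. The remedy is to carry out the partial summation at the level of the density $D_d'(x)\asymp(\log x)^{d-1}$ rather than of $D_d(x)$ itself, exploiting that $h$ is slowly varying (consecutive ratios tend to $1$) while the tail $\vv$ is monotone; this is what makes the boundary and error terms negligible and delivers the sharp comparison.
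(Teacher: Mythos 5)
Your proposal is correct and follows essentially the route the paper intends: the paper omits the proof and simply cites Lemma 2.2 of Gut (1978) together with Lemma 2.2 of Zhong and Wu (2017), and your argument --- grouping the lattice points by $|\mathbf{n}|=m$, invoking the summatory asymptotic for the $d$-fold divisor function, performing the two-step partial summation against the non-increasing tail $\vv(|X|>m\gamma)$, and then identifying the resulting one-dimensional series with the Choquet integral $C_{\vv}$ via the layer-cake identity --- is precisely the content of those two cited lemmas combined. You also correctly isolate the one delicate point (the weight $\exp\{(\log m)^{\alpha}\}$ grows too fast for the crude bound $D_d(M)h(M)$, so the comparison must be made at the level of the density $(\log x)^{d-1}$), which is handled the same way in the sources.
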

\begin{proof}
By similar proof of Lemma 2.2 of Gut \cite{gut1978} and  that of Lemma 2.2 of Zhong and Wu \cite{zhong2017complete}, we conclude this result. Hence, the proof is omitted here.
\end{proof}


\end{document}